\theoremstyle{plain}
\newtheorem{theorem}{Theorem}[section]
\newtheorem{lemma}{Lemma}[section]
\newtheorem{proposition}{Proposition}[section]
\newtheorem{example}{Example}[section]
\newtheorem{definition}{Definition}[section]
\title[]{Self similarity of generalized  Cantor sets}
\author[]{Derong Kong}
\address{School of Mathematical Science, Yangzhou University, Yangzhou 225002, People's Republic of China.}
\email[]{drkong@yzu.edu.cn,\quad dkong.tudelft@gmail.com}
\date{\today}
\newcommand{\R}{\ensuremath{\mathbb{R}}}
\newcommand{\Z}{\ensuremath{\mathbb{Z}}}
\newcommand{\C}{\ensuremath{\mathcal{C}}}
\newcommand{\D}{\ensuremath{\mathcal{D}}}
\newcommand{\Ga}[1]{\Gamma_{\beta,#1}}
\newcommand{\la}{\lambda}
\newcommand{\ga}{\gamma}
\newcommand{\f}{\infty}
\newcommand{\al}{\alpha}
\newcommand{\Om}{\Omega}
\begin{document}

\begin{abstract}
We consider the self-similar structure of a class of  generalized Cantor sets
$$\Ga{\D}=\Big\{\sum_{n=1}^\f d_n\beta^{n}: d_n\in D_n, n\ge 1\Big\},$$
 where  $0<\beta<1$ and $D_n, n\ge 1,$ are nonempty and finite subsets of $\Z$. We give a necessary and sufficient condition   for $\Ga{\D}$ to be a homogeneously generated self similar
  set. An application  to the self-similarity of intersections of
generalized Cantor sets will be given.

\smallskip

\noindent{\em Key words.}{ generalized Cantor sets, self-similar sets, strongly eventually periodic, iterated function systems, intersections.} 

\medskip

\noindent{\bf{MSC}:  28A80, 37B10}

\end{abstract}
\maketitle
\section{Introduction}\label{sec: Introduction}

For an integer $d$, let $\phi_d$ be a contractive  map  defined by
\begin{equation}\label{eq: similitude map}
\phi_d(x)=\beta (x+d), \quad x\in\R,
\end{equation}
where $0< \beta< 1$. Let $\Z^\f$ be the set of infinite  sequences $\{d_n\}_{n=1}^\f$ with each $d_n\in\Z$. We define the coding map $\pi: \Z^\f\rightarrow\R$  by
\begin{equation}\label{eq: projection map}
\pi(\{d_n\}_{n=1}^\f):=\lim_{m\rightarrow\f}\phi_{d_1}\circ\cdots\circ\phi_{d_m}(0)=\sum_{n=1}^\f d_n\beta^n.
\end{equation}
For $n\ge 1$, let $D_n$ be a nonempty and finite subset of  $\Z$, and let
$\D=\bigotimes_{n=1}^\f D_n$
 be the set of infinite sequences $\{d_n\}_{n=1}^\f$ with $d_n\in D_n$ for all $n\ge 1$.
Then the \emph{generalized Cantor set $\Ga{\D}$ of type $\D$} is defined as the image set of $\D$ under the coding map $\pi$, i.e.,
\begin{equation}\label{eq: Gamma_D}
\Ga{\D}:=\pi(\D)=\Big\{\sum_{n=1}^\f d_n\beta^n: d_n\in
D_n~\textrm{for all}~ n\ge 1\Big\}.
\end{equation}
We assume that  the digit sets $D_n, n\ge 1,$ have bounded cardinality  and the sums in (\ref{eq: Gamma_D}) are all convergent, which we express as
\begin{equation}\label{assumptions}
\sup_{n\ge 1}\max_{d, d' \in D_n}(d-d')<\f\quad\textrm{and}\quad \Big|\sum_{i=1}^\f \la_n\beta^n\Big|<\f,
\end{equation}
where $\la_n=\max\{d: d\in D_n\}$.

 The generalized Cantor set $\Ga{\D}$ of type $\D=\bigotimes_{n=1}^\f D_n$ can also be seen in a geometrical way. To illustrate this geometrical construction,  we need the extra assumptions:
\begin{equation*}
\ga_\D:=\inf\{\ga_n: n\ge 1\}>-\f,\quad\la_\D:=\sup\{\la_n: n\ge 1\}<\f,
\end{equation*}
where $\ga_n=\min\{d: d\in D_n\}$.
Then all the digit sets $D_n\subseteq\{\ga_\D, \ga_\D+1,\cdots, \la_\D\}, n\ge 1$.
Let $F_0=[\ga_\D\beta/(1-\beta), \la_\D\beta/(1-\beta)]$ and then, for $n\ge 1$, inductively define
\begin{equation*}
F_n=\bigcup_{d\in D_n}\phi_d(F_{n-1}),
\end{equation*}
where $\phi_d$ is the contractive map defined in (\ref{eq: similitude map}).
Clearly, $\{F_n\}_{n=0}^\f$ is a monotonic decreasing sequence of compact subsets of $\R$. Then the generalized Cantor set $\Ga{\D}$ can be written as
\begin{equation*}
\Ga{\D}=\bigcap_{n=0}^\f F_n.
\end{equation*}
 For example, let $\beta=1/5, D_1=\{0,2\}, D_2=\{1,2\}$ and $D_n=\{0, 1, 2\}$ for all $n\ge 3$. Then $\ga_\D=0, \la_\D=2$, and the first few generations $F_0,\cdots, F_3$ of $\Gamma_{1/5, \D}$ are plotted in Figure \ref{Fig: 1}.
\begin{figure}[h!]
 {\centering \includegraphics[width=12cm]{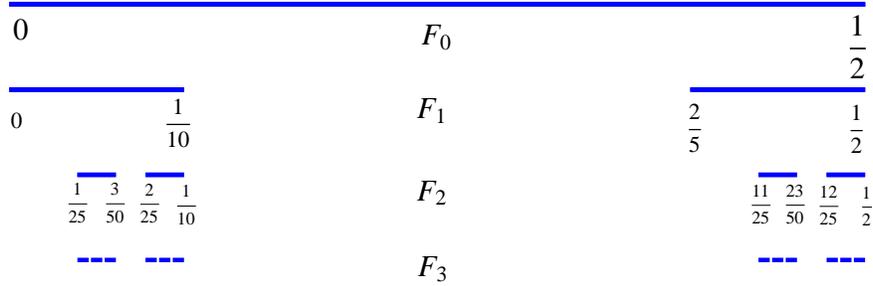}\\
  \caption{The first few generations $F_0, F_1, F_2, F_3$ of the generalized Cantor set $\Ga{\D}$ of type $\D=\bigotimes_{n=1}^\f D_n$ with $\beta=1/5,  D_1=\{0, 2\}, D_2=\{1,2\}$ and $ D_n=\{0, 1, 2\}$ for all $n\ge 3$.}\label{Fig: 1}}
\end{figure}

A map $S: \R\rightarrow\R$ is called a \emph{\index{similitude}similitude}  if there is $r$,  $0<|r|<1$, and $b\in\R$ such that
$
S(x)=r x+b
$ for $x\in\R$. Here $r$ is called the \emph{contraction ratio} of $S$.
Suppose $\{S_j(x)=r_j x+b_j: j\in J\}$, where $J$ is a finite index set. Then
 there exists a unique compact set $\Lambda$ satisfying
\begin{equation*}
\Lambda=\bigcup_{j\in J} S_j(\Lambda).
\end{equation*}
The compact set $\Lambda$ is called\emph{\index{self-similar set}{ a self-similar set }}
generated by the \emph{\index{iterated function system, IFS}{iterated function system} }(IFS)
 $\{S_j: j\in J\}$. In particular, if $r_i=r_j$ for all $i, j\in J$, then $\Lambda$
 is called a \emph{homogeneously generated self-similar set}.

Intersections of Cantor sets have been studied by many authors
(cf.~\cite{Furstenberg_1970, Kenyon_Peres_1991,Kraft_1992,
Moreira_Gustavo_1996, Peres_Solomyak_1998, Elekes_Keleti_Mathe_2010,
Yao_Li_2012}). Recently, Deng et al.~\cite{Deng_He_Wen_2008} studied
the self-similarity of the intersections of the classical
middle-third Cantor set with its translations. Essentially they gave
a characterization for $\Gamma_{1/3,\D}$ being a self-similar set,
where $\D=\bigotimes_{n=1}^\f D_n$ with $D_n$ nonempty subsets of
$\{0,2\}$. Later this   has been extended in
\cite{Kong_Li_Dekking_2010} and \cite{Li_Yao_Zhang_2011} to the case
for
$\Gamma_{\beta,\{0,1,\cdots,N-1\}^\f}\cap(\Gamma_{\beta,\{0,1,\cdots,N-1\}^\f}+t)$,
where $N\ge 2$ is an integer, $0<\beta<1/N$ and $t\in[-1,1]$ has a
unique $\{0,\pm 1,\cdots,\pm(N-1)\}$-code.  In fact they gave a
necessary and sufficient  condition for the generalized Cantor set
$\Ga{\D}$ being a self-similar set, where $0<\beta<1/N$ and   all
the digit sets $D_n\subseteq\{0,1,\cdots,N-1\}, n\ge 1$, are
\emph{\index{consecutive}consecutive}, i.e., there exists some
$\tau_n\ge 0$ such that
 $$D_n=\{\ga_n,\ga_n+1,\cdots, \ga_n+\tau_n\}\subseteq\Z.$$
  However, when some  digit set $D_n$ is not consecutive,
  nothing is known about the self-similarity of $\Ga{\D}$.
   This motivates us to investigate the self-similarity of generalized Cantor sets.

We arrange the paper in the following way. In Section \ref{sec: main
result} we state the main result in Theorem \ref{th: 1}, and its
proof will be given in Section \ref{sec: proof}.  In Section
\ref{sec: application} we consider an application to the
self-similarity of intersections of generalized Cantor sets.

\section{Preliminaries and the  main result}\label{sec: main result}

 For $0<\beta<1$, let $\Ga{\D}$ be  a generalized Cantor set  of type $\D=\bigotimes_{n=1}^\f D_n$. Recall from Equation (\ref{eq: projection map}) and (\ref{eq: Gamma_D}) that $\pi$ is a surjective map from $\D$ to $\Ga{\D}$ defined by
\begin{equation*}
 \pi(\{d_n\}_{n=1}^\f)=\sum_{i=1}^\f d_n\beta^n.
\end{equation*}

 The infinite sequence $\{d_n\}_{n=1}^\f\in\D$ is called a $\D$-code of $\pi(\{d_n\}_{n=1}^\f)\in\Ga{\D}$. We point out that a point  $x\in\Ga{\D}$ may have multiple $\D$-codes. But when $0<\beta<1/N_\D$, the map $\pi$ from $\D$ to $\Ga{\D}$ is bijective and, hence, each point in $\Ga{\D}$ has a unique $\D$-code.
Here $N_\D$ is the \emph{span} of $\D$ defined as
\begin{equation}\label{eq: N_D}
N_\D:=\sup_{n\ge 1}\max_{d, d' \in D_n}(1+d-d').
\end{equation}

Obviously, $N_\D\ge 1$. If $N_\D=1$, then $\Ga{\D}$ contains only a single point. Excluding this trivial case and using the assumption in (\ref{assumptions}), we have $2\le N_\D<\f$. If  no confusion arises about $\D$, we will write $N$ instead of $N_\D$. Let $\Om_N:=\{0,1,\cdots,N-1\}.$

\begin{definition}[Deng, He and Wen \cite{Deng_He_Wen_2008}]\label{def: 1}
A sequence $\{d_n\}_{n=1}^\f\in\Om_N^\f$ is called   \emph{strongly eventually  periodic (or simply, SEP) with period $p\in\Z^+$} if there exist  two finite sequences $\{a_\ell\}_{\ell=1}^p, \{b_\ell\}_{\ell=1}^p\in\Om_N^p$ such that
\begin{equation*}
\{d_n\}_{n=1}^\f=\{a_\ell\}_{\ell=1}^p \overline{\{a_\ell+b_\ell\}_{\ell=1}^p},
\end{equation*}
 where $\overline{\{c_\ell\}_{\ell=1}^p}\in\Om_N^\f$ stands for the infinite repetition  of a finite sequence $\{c_\ell\}_{\ell=1}^p\in\Omega_N^p$.
\end{definition}

  Obviously, a periodic sequence $\{d_n\}_{n=1}^\f$ is a SEP sequence, and a SEP sequence $\{d_n\}_{n=1}^\f$ is eventually periodic. Note that a SEP sequence $\{d_n\}_{n=1}^\f$ is called  strongly periodic  in \cite{Deng_He_Wen_2008}.

  Analogously, we have the definition for the strong eventual periodicity  of a sequence  of sets.

  \begin{definition}\label{def: 2}
A sequence  of sets $\{D_n\}_{i=1}^\f$ with $\emptyset\ne D_n\subseteq\Z$ is called \emph{strongly eventually periodic (or simply, SEP) with period $p\in\Z^+$} if there exist two finite sequences of sets $\{A_\ell\}_{\ell=1}^p, \{B_\ell\}_{\ell=1}^p$  such that
  \begin{equation*}
\{D_n\}_{n=1}^\f=\{A_\ell\}_{\ell=1}^p \overline{\{A_\ell+B_\ell\}_{\ell=1}^p},
\end{equation*}
 where $A+B=\{a+b: a\in A, b\in B\}$  and $\overline{\{C_\ell\}_{\ell=1}^p}$ denotes the infinite repetition  of a finite sequence of sets $\{C_\ell\}_{\ell=1}^p$. \end{definition}

 When a sequence of sets $\{D_n\}_{n=1}^\f$ is SEP with period $p$, it is easy to check that the sequence $\{|D_n|-1\}_{n=1}^\f$ is also SEP with period $p$, where $|A|$ stands for the cardinality of a set $A$. In general this is not true the other way around. But when all the digit sets $D_n, n\ge 1,$ are consecutive, the
SEP of the sequence $\{|D_n|-1\}_{n=1}^\f$ is equivalent to the SEP
of the sequence of the sets $\{D_n-\ga_n\}_{n=1}^\f$, where
$\ga_n=\min\{d: d\in D_n\}$ and $A-b:=A+\{-b\}=\{a-b: a\in A\}$ for
a set $A$ and a real number $b$.

When all the digit sets $D_n, n\ge 1$ are consecutive, the authors
in \cite{Kong_Li_Dekking_2010} and \cite{Li_Yao_Zhang_2011} showed
that for $0<\beta<1/N$ the generalized Cantor set $\Ga{\D}$ is a
self-similar set if and only if the sequence $\{|D_n|-1\}_{n=1}^\f$
is SEP. However, this characterization for the self-similarity of
$\Ga{\D}$ fails  if some digit set $D_n$ is not consecutive. In the
following theorem we give a more general  characterization for
$\Ga{\D}$ being a homogeneously generated self-similar set.

\begin{theorem}\label{th: 1}
  Let $\Ga{\D}$ be a generalized Cantor set of
   type $\D=\bigotimes_{n=1}^\f D_n$  in {\rm(\ref{eq: Gamma_D})} with span $N=N_\D\ge 2$.
   Suppose $0<\beta\le 1/[(3N-1)/2]$. Then  $\Ga{\D}$  is a  homogeneously generated self-similar set
    if, and only if,
 the sequence of sets $\{D_n-\ga_n\}_{n=1}^\f$ is {\rm SEP}, where $\ga_n=\min\{d: d\in D_n\}$.
\end{theorem}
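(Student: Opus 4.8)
The two implications are of very different character, so I would treat them separately after a common normalization. Since replacing every $D_n$ by $D_n-\ga_n$ replaces $\Ga{\D}$ by the translate $\Ga{\D}-\sum_{n\ge1}\ga_n\beta^n$, and a translate of a homogeneously generated self-similar set is again one (conjugate the IFS by the translation), I may and do assume $\min D_n=0$, i.e.\ $D_n=D_n-\ga_n$, throughout; the condition becomes ``$\{D_n\}_{n\ge1}$ is SEP''.

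Sufficiency (SEP $\Rightarrow$ self-similar) is the cleaner half and I would do it first, by an explicit telescoping. Write the SEP data as $D_\ell=A_\ell$ for $1\le\ell\le p$ and $D_{kp+\ell}=A_\ell+B_\ell$ for all $k\ge1$, $1\le\ell\le p$ (here $+$ is the Minkowski sum of Definition~\ref{def: 2}, and $0\in A_\ell,B_\ell$ after the normalization). Writing $\bigoplus_n X_n\beta^n:=\{\sum_n x_n\beta^n:x_n\in X_n\}$, so that $\Ga{\D}=\bigoplus_{n\ge1}D_n\beta^n$, I claim
\[
\Ga{\D}=K+\beta^p\Ga{\D},\qquad K:=\Big(\bigoplus_{m=1}^pA_m\beta^m\Big)+\Big(\bigoplus_{\ell=1}^pB_\ell\beta^{p+\ell}\Big).
\]
Indeed $\beta^p\Ga{\D}=\bigoplus_{j>p}D_{j-p}\beta^j$; adding the factors $B_\ell\beta^{p+\ell}$ converts each $D_{j-p}$ with $p<j\le 2p$ from $A_{j-p}$ into $A_{j-p}+B_{j-p}=D_j$, while for $j>2p$ one already has $D_{j-p}=D_j$, and adjoining $\bigoplus_{m\le p}A_m\beta^m=\bigoplus_{m\le p}D_m\beta^m$ restores the first block, so the right-hand side collapses to $\bigoplus_{n\ge1}D_n\beta^n=\Ga{\D}$. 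Since $K$ is finite and $0<\beta^p<1$, this identity says exactly that the compact set $\Ga{\D}=\pi(\D)$ is the fixed point of the Hutchinson operator of the IFS $\{x\mapsto\beta^px+t:t\in K\}$, all of whose maps share the ratio $\beta^p$; hence $\Ga{\D}$ is homogeneously generated and self-similar.

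For necessity I would argue in three steps. A homogeneous IFS $\{x\mapsto rx+b_j:j\in J\}$ with attractor $\Ga{\D}$ gives, simply by rewriting the union as a Minkowski sum, $\Ga{\D}=B+r\Ga{\D}$ with $B:=\{b_j:j\in J\}$ finite. \emph{Step one}, the crux, is to show $r=\beta^p$ for some $p\in\Z^+$ (with $r>0$, the orientation-reversing case being handled by the same estimates applied to the reflection). This is precisely where the hypothesis $0<\beta\le 2/(3N-1)$, rather than merely $\beta<1/N$, is needed: it forces enough separation between the level-$n$ cylinders of $\Ga{\D}$ relative to their diameters that any similitude carrying $\Ga{\D}$ into itself must have contraction ratio a power of $\beta$ and must map $\Ga{\D}$ onto a union of cylinders aligned with the level-$p$ subdivision. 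Concretely I would quantify the largest gap of $\Ga{\D}$ and of its cylinders and show that a copy $r\Ga{\D}+b\subseteq\Ga{\D}$ cannot straddle such a gap, which both pins $|r|$ to the gap scale $\beta^p$ and yields the alignment. \emph{Step two} iterates $\Ga{\D}=B+\beta^p\Ga{\D}$ to obtain $\Ga{\D}=\bigoplus_{k\ge0}\beta^{kp}B$, a second description of the same set, now carried by the constant block-digit $B$ in base $\beta^p$. \emph{Step three} compares this with $\Ga{\D}=\bigoplus_{n\ge1}D_n\beta^n$: because $\beta<1/N=1/N_\D$ every point of $\Ga{\D}$ has a unique $\D$-code, and matching the two representations block by block (using the alignment from Step one, so that $B$ respects the first $p$ coordinates) forces $D_{p+\ell}=D_\ell+B_\ell$ for suitable finite $B_\ell\ni0$; with $A_\ell:=D_\ell$ this is exactly $\{D_n\}_{n\ge1}=\{A_\ell\}_{\ell=1}^p\,\overline{\{A_\ell+B_\ell\}_{\ell=1}^p}$, i.e.\ SEP of $\{D_n-\ga_n\}_{n\ge1}$.

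The main obstacle is Step one of necessity: proving that homogeneity forces $r=\beta^p$ together with the cylinder alignment. Everything after it is bookkeeping with unique codes, and sufficiency is a direct Minkowski computation; but the rigidity of the ratio is where the sharp constant $2/(3N-1)$ must genuinely be used, and I expect the careful gap-versus-diameter estimate there to be the most delicate part of the whole argument.
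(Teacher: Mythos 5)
Your sufficiency argument is correct and coincides with the paper's Proposition \ref{prop: sep to}: your set $K$ is exactly the paper's $\mathcal{E}$, and the identity $\Ga{\D}=K+\beta^p\Ga{\D}$ is the same Hutchinson equation, so that half needs no further comment.

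The necessity half has a genuine gap, located precisely where you declare the work to be over. You claim that once the ratio is pinned to $\beta^p$ and unique coding is available (which holds for all $\beta<1/N$), the decomposition $D'_{p+\ell}=D'_\ell+B_\ell$ follows by ``bookkeeping with unique codes.'' The paper's Example \ref{ex: 1} refutes this plan: there $N=5$ and $\beta=1/6$ satisfies $1/[(3N-1)/2]<\beta<1/N$, every point has a unique $\D$-code, $\Ga{\D}$ is self-similar under a homogeneous IFS of ratio $\beta^2$ (a power of $\beta$, with each piece a union of level-$4$ cylinders), and yet the digit sequence is not SEP. Hence any argument for your Step three that uses only unique coding and cylinder alignment must fail, and the sharp constant $2/(3N-1)$ has to enter the block-matching itself, not --- as you assert --- the determination of the ratio. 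Concretely, the translations $b_j$ have infinite $\D'$-codes, so $\bigoplus_{k}\beta^{kp}B$ does not superpose digitwise without carries; the sums $b_{j,p+\ell}+x'_\ell$ can reach $2(N-1)$, which exceeds $[1/\beta]-1$ unless $\beta\le 2/(3N-1)$ and $d\le[(N-1)/2]$, and the paper must run two separate cases with specially chosen test points (padded with the maximal digits $\la_n$), an auxiliary unique-coding lemma over the larger alphabet $\Om_{[1/\beta]}$, and a contradiction argument when $b_{j_y,p+\ell}+\la_\ell\ge[1/\beta]$. Two secondary points: your gap-versus-diameter route to $r=\beta^p$ is not how the paper proceeds (it invokes a prior rationality result valid for all $\beta<1/N$ and then exploits $0\in\Ga{\D'}$, so that some map is $x\mapsto\beta^qx$ and one gets the increasing chain $D'_\ell\subseteq D'_{q+\ell}\subseteq\cdots$), and it is exactly the stabilization of that chain that yields the eventual periodicity $D'_n=D'_{n+p}$ for $n>p$ --- a fact you never establish but use implicitly when you treat $D_{p+\ell}$ as well defined independently of the period block.
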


We remark here that Theorem \ref{th: 1} generalize \cite[Theorem
1.2]{Pedersen_Phillips_2012} where the authors only prove the
theorem for $0<\beta<1/(2N-1)$. When $\beta$ gets larger, the proof
is more involved. Moreover, we give an example to illustrate that
the upper bound $1/[(3N-1)/2]$ for $\beta$ in Theorem \ref{th: 1}
can not be improved to $1/N$ as in the consecutive case.

\begin{example}\label{ex: 1}
  Let $D_1=\{0\}, D_2=\{0,4\}, D_{2m+1}=\{0,1\}, D_{2m+2}=\{0,2,4\}$ for all $m\ge 1$. Clearly, the span $N$ of $\D=\bigotimes_{n=1}^\f D_n$ equals $5$, and the sequence of sets $\{D_n\}_{n=1}^\f$ is not {\rm SEP}. Take $\beta=1/6.$ Then $1/[(3N-1)/2]<\beta<1/N$. We will show  that $\Ga{\D}$ is a  self-similar set generated by an IFS $\{g_j(x)=r x+b_j: j\in J\}$ with $r=\beta^2$.

Recall that $\pi$ is the coding map from $\D$ to $\Ga{\D}$ defined
by letting
\begin{equation*}
\pi(\{d_n\}_{n=1}^\f)=\sum_{n=1}^\f d_n\beta^n\quad\textrm{
for}~\{d_n\}_{n=1}^\f\in \D.
 \end{equation*}
 Let $\{g_j(x)=\beta^2 x+b_j\}_{j=1}^8$ be the sequence of similitudes, where
\begin{equation*}
\begin{split}
b_1&=\pi(0000\overline{0}),\quad b_2=\pi(000 2 \overline{0}),\quad b_3=\pi(000 4 \overline{0}),\quad b_4=\pi(0010\overline{0});\\
b_5&=\pi(0400\overline{0}),\quad b_6=\pi(0402\overline{0}),\quad
b_7=\pi(0404\overline{0}),\quad b_8=\pi(0410\overline{0}).
\end{split}
\end{equation*}
Then, by using $D_n =D_{n+2}$ for $n\ge 3$, we have
\begin{equation*}
\begin{split}
g_1(\Ga{\D})&=g_1\Big(\sum_{n=1}^\f D_n \beta^n\Big)=D_1\beta^3+D_2\beta^4+\sum_{n=3}^\f D_n\beta^{n+2}\\
&=D_1\beta^3+D_2\beta^4+\sum_{n=5}^\f D_n\beta^{n}.
\end{split}
\end{equation*}
 Similarly,
$ g_2(\Ga{\D})=D_1\beta^3+(D_2+2)\beta^4+\sum_{n=5}^\f D_n\beta^n,
        g_3(\Ga{\D})=D_1\beta^3+(D_2+4)\beta^4+\sum_{n=5}^\f D_n\beta^n,
            g_4(\Ga{\D})=(D_1+1)\beta^3+D_2\beta^4+\sum_{n=5}^\f D_n\beta^n.$
Then, by using $\beta=1/6$ we obtain that
\begin{equation*}
\bigcup_{j=1}^4 g_j(\Ga{\D})=\sum_{n=3}^\f D_n\beta^n.
\end{equation*}
In a similar way, one can also show that $\bigcup_{j=5}^8
g_j(\Ga{\D})=4\beta^2+\sum_{n=3}^\f D_n\beta^n.$ Hence, $\Ga{\D}$ is
a self-similar set generated by the IFS $\{g_j(x): 1\le j\le 8\}$,
i.e., $ \bigcup_{j=1}^8 g_j(\Ga{\D})=\sum_{n=1}^\f
D_n\beta^n=\Ga{\D}.
$
\end{example}

\section{Proof of Theorem \ref{th: 1}}\label{sec: proof}
For convenience we  introduce the following notations. For a
sequence of sets $\{A_n\}_{n=1}^\f$, let
 $$\sum_{n=1}^\f A_n:=\Big\{\sum_{n=1}^\f a_n: a_n\in A_n, n\ge 1\Big\},$$
 and for a real number $x$ and a set $A$ let $xA:=\{xa: a\in A\}=Ax$. Then by (\ref{eq: Gamma_D}) the generalized Cantor set $\Ga{\D}$ of type $\D$ can be rewritten as $$\Ga{\D}=\sum_{n=1}^\f D_n\beta^n.$$

To prove Theorem  \ref{th: 1}, it is convenient to shift $\Ga{\D}$
such that $0$ is the left endpoint.  More explicitly, let $
D'_n:=D_n-\ga_n$ with $\ga_n=\min\{d: d\in D_n\}. $ Then $0\in
D_n'\subseteq \Om_N$ for all $n\ge 1$. Accordingly, let
\begin{equation*}\label{eq:H-D}
\D':=\bigotimes_{n=1}^\f D'_n=\bigotimes_{n=1}^\f(D_n-\ga_n).
\end{equation*}
 Then the generalized Cantor set $\Ga{\D'}$ of type $\D'=\bigotimes_{n=1}^\f D'_n$
 is a translation of  $\Ga{\D}$, since
\begin{equation}\label{eq:
Gamma_H-D}
\begin{split}
 \Ga{\D'}&=\sum_{n=1}^\f D_n'\beta^n=\sum_{n=1}^\f
(D_n-\gamma_n)\beta^n=\sum_{n=1}^\f
D_n\beta^n-\sum_{n=1}^\f\gamma_n\beta^n\\
&=\Ga{\D}-\sum_{n=1}^\f\gamma_n\beta^n.
\end{split}
\end{equation}
So  it suffices to prove Theorem  \ref{th: 1} for $\Ga{\D'}$ instead
of $\Ga{\D}$. Clearly, since $0\in D'_n$ for all $n\ge 1$, we have
$\sum_{n=1}^m d_n\beta^n=\pi(d_1\cdots d_m \overline{0})\in\Ga{\D'}$
for any $m\ge 1$ and for any $d_\ell\in D'_\ell, \;
\ell=1,\cdots,m$. In particular, $0=\pi(\overline{0})\in\Ga{\D'}$.

The sufficiency of Theorem \ref{th: 1} follows from the following
proposition which can be proved in a similar way as in the proof of
\cite[Theorem 1.2]{Li_Yao_Zhang_2011}.
\begin{proposition}\label{prop: sep to}
 Let $\Ga{\D'}$ be a generalized Cantor set of type
 $\D'=\bigotimes_{n=1}^\f D_n'$ in {\rm(\ref{eq: Gamma_H-D})}. Suppose $0<\beta<1$.
 If the sequence of sets $\{D_n'\}_{n=1}^\f$ is {\rm SEP}, then $\Ga{\D'}$  is a homogeneously generated
 self-similar set.
\end{proposition}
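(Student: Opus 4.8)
The plan is to make the SEP structure completely explicit and then collapse everything into two Minkowski--sum identities which combine into a single self-similar invariance equation with a common contraction ratio $\beta^p$. First I would unpack Definition \ref{def: 2}. Writing $E_\ell=A_\ell+B_\ell$, the hypothesis gives $D_n'=A_n$ for $1\le n\le p$ and $D_n'=E_\ell$ whenever $n>p$ and $n\equiv\ell\pmod p$. I then introduce the three finite sets $P=\sum_{\ell=1}^p A_\ell\beta^\ell$, $R=\sum_{\ell=1}^p B_\ell\beta^\ell$ and $Q=\sum_{\ell=1}^p E_\ell\beta^\ell$ (all Minkowski sums), together with the auxiliary set $G:=\sum_{n=1}^\f E_{((n-1)\bmod p)+1}\beta^n$, the generalized Cantor set attached to the purely periodic digit sequence $\overline{E_1\cdots E_p}$. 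Since $(A_\ell+B_\ell)\beta^\ell=A_\ell\beta^\ell+B_\ell\beta^\ell$ and Minkowski addition is commutative and associative, one has $Q=P+R$.

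Next I would record the two structural identities, both as genuine equalities of sets of reals. Identity (i): peeling off one full period from $G$ yields $G=Q+\beta^p G$, because shifting every index by $p$ preserves residues modulo $p$. Identity (ii): splitting $\Lambda:=\Ga{\D'}$ into its initial $A$-block and its periodic tail yields $\Lambda=P+\beta^p G$, the tail being exactly the $p$-fold rescaling of $G$. These reindexings are legitimate because assumption (\ref{assumptions}) makes all the series absolutely convergent with uniformly bounded digits, so the relevant sums of sets may be regrouped and rescaled freely.

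The computation then closes in one line. Scaling (ii) by $\beta^p$ gives $\beta^p\Lambda=\beta^p P+\beta^{2p}G$, and substituting (i) into (ii) gives $\Lambda=P+\beta^p(P+R)+\beta^{2p}G$. A direct comparison shows that adjoining the finite set $P+\beta^p R$ to $\beta^p\Lambda$ recovers $\Lambda$: indeed $\beta^p\Lambda+(P+\beta^p R)=\beta^p P+\beta^{2p}G+P+\beta^p R=P+\beta^p(P+R)+\beta^{2p}G=\Lambda$. Setting $T:=P+\beta^p R=\sum_{\ell=1}^p A_\ell\beta^\ell+\sum_{\ell=1}^p B_\ell\beta^{p+\ell}$, which is finite, this reads $\Lambda=\bigcup_{t\in T}(\beta^p\Lambda+t)$. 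Hence the IFS $\{S_t(x)=\beta^p x+t:t\in T\}$, all of whose maps share the single contraction ratio $\beta^p\in(0,1)$, satisfies its invariance equation with $\Lambda$. Since $\Lambda=\pi(\D')$ is a continuous image of the compact product space $\D'$, it is nonempty and compact, so uniqueness of the attractor forces $\Lambda$ to be precisely the self-similar set generated by $\{S_t\}$; being driven by maps of equal ratio, it is homogeneously generated, as required.

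The main obstacle is bookkeeping rather than conceptual: one must verify that every step above is an honest equality of subsets of $\R$ and not merely a formal manipulation of generating expressions. The delicate points are the identity $G=Q+\beta^p G$ and the substitution leading to $\Lambda=\beta^p\Lambda+T$ at the level of Minkowski sums, i.e. that regrouping $\sum_{\ell}(A_\ell+B_\ell)\beta^\ell$ as $\sum_\ell A_\ell\beta^\ell+\sum_\ell B_\ell\beta^\ell$ and peeling off a period commute with the infinite summation. This is exactly where the boundedness and convergence hypotheses in (\ref{assumptions}) enter; once they are in force the identities hold verbatim, and no upper restriction on $\beta$ is needed, which is why the proposition is valid for every $0<\beta<1$.
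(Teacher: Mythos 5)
Your proposal is correct and follows essentially the same route as the paper: the translation set $T=P+\beta^pR=\sum_{\ell=1}^{p}A_\ell\beta^{\ell}+\sum_{\ell=1}^{p}B_\ell\beta^{p+\ell}$ is exactly the set $\mathcal{E}$ of the paper's IFS $\{g_e(x)=\beta^q x+e: e\in\mathcal{E}\}$, and your Minkowski-sum bookkeeping simply carries out explicitly the calculation the paper delegates to the cited reference. The verification via $G=Q+\beta^pG$ and $\Lambda=P+\beta^pG$, followed by uniqueness of the compact attractor, is sound.
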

\begin{proof}
By Definition \ref{def: 2}, we assume
$\{D_n\}_{n=1}^\f=\{A_\ell\}_{\ell=1}^q
\overline{\{A_\ell+B_\ell\}_{\ell=1}^q}$ for some $q\in\Z^+$. In a
similar calculation as in \cite[Theorem 1.2]{Li_Yao_Zhang_2011} one
can show that $\Ga{\D'}$ is a self-similar set generated by the IFS
$\{g_e(x)=\beta^q x+e: e\in\mathcal{E}\}$,
  where
  $$\label{eq: sec3 E}
  \mathcal{E}=\bigg\{ \sum_{\ell=1}^{2q}d_\ell\beta^{\ell}: d_\ell\in A_\ell, d_{\ell+q}\in B_\ell~\textrm{for all}~1\le\ell\le q \bigg\}.
$$
This finishes the proof.
\end{proof}

To prove the necessity we need more effort. Recall in {\rm(\ref{eq:
Gamma_H-D})} that $\Ga{{\D'}}$ is a generalized Cantor set of type
${\D'}=\bigotimes_{n=1}^\f D'_n$  with span $N=N_{\D'}\ge 2$. If
$\Ga{\D'}$ is homogeneously generated self-similar set, we will show
in the following lemma that the sequence of sets $\{D'_n\}_{n=1}^\f$
is eventually periodic.
\begin{lemma}\label{lem: th2-1}
 Suppose $0<\beta<1/N$. If $\Ga{{\D'}}$ is a homogeneously generated self-similar set, then there exists some $p\in\Z^+$ such that $\{D'_n\}_{n=1}^\f=\{
D'_\ell\}_{\ell=1}^p\overline{\{ D'_{p+\ell}\}_{\ell=1}^{p}}$  with
$D'_\ell\subseteq D'_{p+\ell}$ for $\ell=1,\cdots,p$.
\end{lemma}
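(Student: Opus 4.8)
I want to show that if $\Gamma_{\beta,\mathcal{D}'}$ is homogeneously self-similar with ratio $r$ (equal to $\beta^p$ after identifying the common ratio), then the digit-set sequence is eventually periodic with the nesting property $D'_\ell \subseteq D'_{p+\ell}$. Let me think about what the hypotheses give me.

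First, let me reconstruct the setup. We have $\Gamma := \Gamma_{\beta,\mathcal{D}'} = \sum_{n=1}^\infty D'_n \beta^n$ with $0 \in D'_n \subseteq \Omega_N = \{0,1,\dots,N-1\}$ for all $n$. The key consequence of $0 < \beta < 1/N$ is that $\pi$ is a bijection from $\mathcal{D}'$ onto $\Gamma$, so each point has a unique code. Since $0 \in D'_n$ for all $n$, we have $0 = \pi(\overline{0}) \in \Gamma$, and $0$ is the left endpoint of $\Gamma$.

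The plan is to work with the normalized set $\Ga{\D'}$ (so that $0=\min\Ga{\D'}$, and to write $L=\max\Ga{\D'}$) and to exploit the standing hypothesis $0<\beta<1/N$ twice over. First I record the two structural facts it buys. (i) The coding map $\pi$ is injective, so every point of $\Ga{\D'}$ has a unique $\D'$-code. (ii) For each $m$ the natural cylinder decomposition $\Ga{\D'}=\bigcup_{(d_1,\dots,d_m)}\big(\sum_{i=1}^m d_i\beta^i+\beta^m\Gamma^{(m)}\big)$, where $\Gamma^{(m)}=\sum_{n\ge 1}D'_{m+n}\beta^n$, is a union of \emph{pairwise separated} pieces: a cylinder has diameter at most $(N-1)\beta^{m+1}/(1-\beta)$ while consecutive cylinders are spaced by at least $\beta^m$, and $(N-1)\beta/(1-\beta)<1$ is exactly $\beta<1/N$. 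Consequently the maximal separated subpieces of $\Ga{\D'}$ containing $0$ are precisely the cylinders $K_m:=\beta^m\Gamma^{(m)}$, and $K_{m-1}\supsetneq K_m$ happens exactly at the branching levels $m=n_1<n_2<\cdots$ where $D'_m\ne\{0\}$.

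Next I extract a self-similarity of the germ of $\Ga{\D'}$ at $0$. Replacing the given IFS by its second iterate I may assume the common ratio $r>0$. Since $0=\min\Ga{\D'}=\min_j b_j$, some translation vanishes, so the piece anchored at $0$ is exactly $r\Ga{\D'}$; in particular $r\Ga{\D'}\subseteq\Ga{\D'}$. Writing $\epsilon:=\min\{b_j:b_j>0\}>0$, every other piece lies in $[\epsilon,\infty)$, whence $\Ga{\D'}\cap[0,\epsilon)=r\Ga{\D'}\cap[0,\epsilon)$. Thus $x\mapsto rx$ is a homothety carrying $\Ga{\D'}\cap[0,\epsilon)$ bijectively onto $\Ga{\D'}\cap[0,r\epsilon)$, i.e. an isomorphism of the nested cluster structure near $0$ onto a deeper copy of itself.

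The heart of the argument---and the step I expect to be the main obstacle---is to turn this scale self-similarity into periodicity. Because the clusters near $0$ are totally ordered by inclusion and, by (ii), are exactly the $K_{n_i}$, the homothety must shift the branching sequence by a fixed amount: $r K_{n_i}=K_{n_{i+c}}$ for all large $i$ and some constant $c\ge 1$. Matching, level by level, the branching digit sets and the lengths of the intervening all-zero runs (which a homothety preserves) forces both to be periodic from some index $M$ on, with common period $P:=n_{i+c}-n_i$; that is, $D'_m=D'_{m+P}$ for every $m\ge M$. Feeding $\Gamma^{(n_i)}=\Gamma^{(n_{i+c})}$ (a consequence of this periodicity) back into $r K_{n_i}=K_{n_{i+c}}$ collapses it to the scalar identity $r=\beta^P$. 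The delicate point here is justifying that the homothety really does identify the finer branching data rigidly, for which the separation in (ii) is essential.

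It remains to put the conclusion into the asserted shape. From $r=\beta^P$ and $\beta^P\Ga{\D'}=r\Ga{\D'}\subseteq\Ga{\D'}$ I expand $\beta^P\Ga{\D'}=\sum_{k>P}D'_{k-P}\beta^k$; each of its points has a $\D'$-code whose first $P$ digits are $0$ and whose $k$-th digit ($k>P$) ranges over all of $D'_{k-P}$, so uniqueness of codes gives $D'_{k-P}\subseteq D'_k$, i.e. $D'_\ell\subseteq D'_{\ell+P}$ for every $\ell\ge 1$. Finally I choose $p$ to be a multiple of $P$ with $p\ge M$. Then periodicity with period $P$ makes the tail from index $p+1$ exactly $p$-periodic, while composing the containments along $D'_\ell\subseteq D'_{\ell+P}\subseteq\cdots\subseteq D'_{\ell+p}$ yields $D'_\ell\subseteq D'_{p+\ell}$ for $\ell=1,\dots,p$. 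This is precisely the form $\{D'_n\}_{n=1}^\f=\{D'_\ell\}_{\ell=1}^p\,\overline{\{D'_{p+\ell}\}_{\ell=1}^p}$ with $D'_\ell\subseteq D'_{p+\ell}$ claimed in the lemma.
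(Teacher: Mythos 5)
Your overall skeleton matches the paper's: reduce to a positive contraction ratio by squaring the IFS, use $0=\min\Ga{\D'}$ to find a piece of the form $x\mapsto rx$ so that $r\Ga{\D'}\subseteq\Ga{\D'}$, then (once $r=\beta^P$ with $P\in\Z^+$) use uniqueness of $\D'$-codes to get $D'_\ell\subseteq D'_{\ell+P}$ and assemble the conclusion. The final assembly is fine, and in fact easier than you make it: once $D'_\ell\subseteq D'_{\ell+P}\subseteq D'_{\ell+2P}\subseteq\cdots$ holds for all $\ell$, the finiteness of $\Om_N$ forces these increasing chains to stabilize, which already gives the eventual periodicity — this is exactly the paper's stabilization step, and it makes your separate derivation of periodicity unnecessary.

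The genuine gap is the step you yourself flag as ``the main obstacle'': proving that $r$ is a positive \emph{integer} power of $\beta$. The paper does not get this from local geometry; it writes $r=\beta^\al$, invokes the nontrivial fact that $\al$ must be \emph{rational} (citing Theorem 3.2 of Kong--Li--Dekking), and then passes to an $m$-th iterate so that $m\al\in\Z^+$. Your substitute is a rigidity argument for the cluster structure at $0$, but it is only sketched, and its key premise is false: the clopen (``separated'') neighbourhoods of $0$ in $\Ga{\D'}$ are \emph{not} in general exactly the cylinders $K_m=\beta^m\Gamma^{(m)}=\Ga{\D'}\cap[0,\beta^m)$. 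If some $D'_m$ contains, say, $\{0,1,3\}$, then the union of the level-$m$ subcylinders with digits $0$ and $1$ is separated from the rest of $K_{m-1}$ by a gap comparable to $2\beta^m$, yet is not any $K_j$; non-consecutive digit sets are precisely the situation this paper treats, so this cannot be waved away. Consequently the assertion $rK_{n_i}=K_{n_{i+c}}$ does not follow from an intrinsic characterization of the $K_m$'s, and even if it did, deducing the scalar identity $r\beta^{n_i}=\beta^{n_{i+c}}$ from the set identity $r\beta^{n_i}\Gamma^{(n_i)}=\beta^{n_{i+c}}\Gamma^{(n_{i+c})}$ requires the tail-periodicity $\Gamma^{(n_i)}=\Gamma^{(n_{i+c})}$ that you are simultaneously trying to establish — a circularity. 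To repair the proof you would need either to supply a genuine argument that $\log r/\log\beta\in\mathbb{Q}$ (as the cited theorem does) or a correct rigidity argument replacing the false cylinder characterization.
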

\begin{proof}
Suppose $\Ga{\D'}$ is a self-similar set generated by an IFS
$\{f_i(x)=r x+a_i: i\in I \}$ with $0<|r|<1$, i.e.,
 \begin{equation*}\label{eq: sec4 lem 1}
 \Ga{{\D'}}=\bigcup_{i\in I} f_i(\Ga{{\D'}}).
  \end{equation*}
  One can assume that $0<r<1$, since otherwise we can consider the IFS $\{f_{i}\circ f_{i'}(x):  i, i'\in I\}$ instead of $\{f_{i}(x): i\in I \}$.  Since $0<\beta<1$, there exists some  $\alpha> 0$ such that  $r=\beta^{\al}$.
In a similar way as in the proof of \cite[Theorem
3.2]{Kong_Li_Dekking_2010} one can show that $\al$ must be a
rational number. Take $m\in\Z^+$ such that $q:=m\al\in\Z^+$. Then
$\Ga{\D'}$ can be also generated by $\{g_j: j\in
J\}=\{f_{i_1}\circ\cdots\circ f_{i_m}(x): i_n\in I, n=1,\cdots,m\}$
with contraction ratio equals $r^{m}=\beta^q$.

Note that $0\in\Ga{{\D'}}$. Then there exists $j_0\in J$ such that
$g_{j_0}(x)=\beta^q x$. For $\ell\ge 1$, let $d\in D'_\ell$, and
 we take $d\beta^{\ell}=\pi(0^{\ell-1}d
\;\overline{0})\in\Ga{{\D'}}$. Then
\begin{equation*}
g_{j_0}(d\beta^{\ell})=d\beta^{q+\ell}\in\Ga{{\D'}}.
\end{equation*}
This implies $d\in D'_{q+\ell}$, since, by $0<\beta<1/N$, any point in $\Ga{{\D'}}$ has a unique ${\D'}$-code. So $D'_\ell\subseteq D'_{q+\ell}$.
By iteration, this yields
\begin{equation*}
D'_\ell\subseteq D'_{q+\ell}\subseteq\cdots\subseteq D'_{mq+\ell}\subseteq\cdots
\end{equation*}
for all $1\le \ell\le q$ and $m\ge 1$.

Since $D'_n\subseteq\Om_N$ for all $n\ge 1$,  there exists some large $m_*\ge 1$ which can be chosen independent of $\ell$, such that $D'_{m_* q+\ell}=D'_{mq+\ell}$ for all $1\le \ell\le q$ and $m\ge m_*$. Take $p=m_* q$. Then
\begin{equation*}
\{D'_n\}_{n=1}^\f=\{ D'_\ell\}_{\ell=1}^p \overline{\{ D'_{p+\ell}\}_{\ell=1}^{p}}.
\end{equation*}
Clearly, $D'_\ell\subseteq D'_{p+\ell}$ for $\ell=1,\cdots,p.$ This completes the proof.
\end{proof}

Recall from Section \ref{sec: main result} that an infinite sequence $\{x_n\}_{n=1}^\f\in{\D'}$ is called a ${\D'}$-code of $x\in\Ga{{\D'}}$ if $x=\sum_{n=1}^\f x_n\beta^n$ with $x_n\in D'_n$ for all $n\ge 1$. Since $0<\beta<1/N$, we have $N\le[1/\beta]$. Then ${\D'}\subseteq\Om_N^\f\subseteq\Omega_{[1/\beta]}^\f$, and so $\Ga{\D'}\subseteq\Ga{\Om_{[1/\beta]}^\f}$. In this case, $\{x_n\}_{n=1}^\f$ is also called a $\Omega_{[1/\beta]}^\f$-code of $x$.
\begin{lemma}\label{lem: unique code}
 Suppose $0<\beta<1/N$. If
$x\in\Ga{{\D'}}\subseteq\Ga{\Om_{[1/\beta]}^\f}$ has a
$\Om_{[1/\beta]}^\f$-code $\{x_n\}_{n=1}^\f$ which is not of the
form $d_1\cdots d_k \overline{([1/\beta]-1)}$, then
$\{x_n\}_{n=1}^\f$ is  the unique ${\D'}$-code of $x$.
\end{lemma}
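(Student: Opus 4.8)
The plan is to write $M:=[1/\beta]$ and reduce the whole statement to understanding when a point of $\Ga{\Om_M^\f}$ can carry two distinct $\Om_M$-codes. Since $x\in\Ga{{\D'}}$, it has at least one ${\D'}$-code $\{y_n\}_{n=1}^\f$, and because $D'_n\subseteq\Om_N\subseteq\Om_M$ this $\{y_n\}$ is in particular an $\Om_M$-code of $x$. So it suffices to show that any $\Om_M$-code $\{y_n\}_{n=1}^\f$ of $x$ whose digits all lie in $\Om_N$ must coincide with the given code $\{x_n\}_{n=1}^\f$: this yields at once that $\{x_n\}$ itself is a ${\D'}$-code (being equal to $\{y_n\}$) and that it is the only one.

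The key step is then to pin down the structure of two distinct $\Om_M$-codes of a common point. Suppose $\{x_n\}$ and $\{y_n\}$ are $\Om_M$-codes of $x$ that first differ at index $k$, say $y_k>x_k$. Dividing $\sum_{n\ge k}(y_n-x_n)\beta^n=0$ by $\beta^k$ gives
$$
y_k-x_k=\sum_{n>k}(x_n-y_n)\beta^{\,n-k}\le (M-1)\frac{\beta}{1-\beta}.
$$
Because $M=[1/\beta]$ forces $\beta\le 1/M$, the right-hand bound satisfies $(M-1)\beta/(1-\beta)\le 1$, with equality exactly when $\beta=1/M$. As the left-hand side is a positive integer, the inequality can hold only if $\beta=1/M$, $y_k-x_k=1$, and $x_n-y_n=M-1$ for every $n>k$; equivalently $x_n=M-1$ and $y_n=0$ for all $n>k$. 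Hence two distinct $\Om_M$-codes of one point force $\beta=1/M$, one being eventually $\overline{0}$ and the other eventually $\overline{(M-1)}$.

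Finally I would combine these facts. Assume for contradiction that an $\Om_M$-code $\{y_n\}$ of $x$ with digits in $\Om_N$ differs from $\{x_n\}$. By the previous step one of the two is eventually $\overline{(M-1)}$, and since $\{x_n\}$ is by hypothesis not of the form $d_1\cdots d_k\overline{([1/\beta]-1)}$, the eventually-$\overline{(M-1)}$ code must be $\{y_n\}$. The same step also gives $\beta=1/M$; together with $\beta<1/N$ this yields $M>N$, so $M-1\ge N$. Then the tail digit $M-1$ of $\{y_n\}$ lies outside $\Om_N=\{0,\dots,N-1\}$, contradicting $y_n\in\Om_N$. Therefore $\{y_n\}=\{x_n\}$, giving both existence and uniqueness of the ${\D'}$-code.

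The only genuinely delicate point I expect is the geometric-series estimate together with its equality analysis, since that is what forces non-uniqueness to occur solely at $\beta=1/M$ and to produce the dual $\overline{0}/\overline{(M-1)}$ tails; the remainder is bookkeeping. The two hypotheses play complementary roles: that $\{x_n\}$ is not eventually $\overline{(M-1)}$ rules out the wrong representative, while the strict inequality $\beta<1/N$ (yielding $M-1\ge N$) is precisely what makes the competing tail digit inadmissible as a ${\D'}$-code.
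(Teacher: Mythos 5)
Your proof is correct and follows essentially the same route as the paper: the paper simply cites the standard dichotomy that the $\Om_{[1/\beta]}^\f$-coding map is injective when $1/\beta\notin\Z^+$ and fails to be injective only on the $d_k\overline{([1/\beta]-1)}$ versus $(d_k+1)\overline{0}$ pairs when $1/\beta\in\Z^+$, whereas you prove that dichotomy via the geometric-series equality analysis and then make explicit the final deduction (that $\beta=1/[1/\beta]<1/N$ forces the tail digit $[1/\beta]-1\ge N$ out of $\Om_N$) which the paper leaves to the reader.
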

\begin{proof}
  The lemma follows by the fact that when $1/\beta\notin\Z^+$ the coding map $\pi$ from $\Om_{[1/\beta]}^\f$ to $\Ga{\Om_{[1/\beta]}^\f}$ is bijective, and  when $1/\beta\in\Z^+$ then $\pi$ is almost bijective in the sense that only countably many points in $\Ga{\Om_{[1/\beta]}^\f}$ have two $\Om_{[1/\beta]}^\f$-codes of the forms $d_1\cdots d_{k-1}d_k \overline{([1/\beta]-1)}$ and $d_1\cdots d_{k-1} (d_k+1)\overline{ 0}$ for some $d_k+1\le [1/\beta]-1$.
\end{proof}

\begin{proof}[Proof of the necessity]
If $N=2$, then all the digit sets $D_n$ are consecutive, and in this case
the necessity follows from \cite{Li_Yao_Zhang_2011} and
\cite{Kong_Li_Dekking_2010}. In the following we will assume $N\ge
3$. Then by the assumption we have
 $0<\beta\le 1/[(3N-1)/2]<1/N.$ By Lemma \ref{lem: th2-1} there exists $p\in\Z^+$ such that
\begin{equation}\label{eq: periodic}
  \{D_n\}_{n=1}^\f=\{ D'_\ell\}_{\ell=1}^p   \overline{\{ D'_{p+\ell}\}_{\ell=1}^p}\quad\textrm{with}\quad D'_\ell\subseteq D'_{p+\ell}\quad\textrm{for}~1\le\ell\le p.
\end{equation}
Moreover, we can require from Lemma \ref{lem: th2-1} that $\Ga{{\D'}}$ is generated by an IFS $\{g_j(x)=\beta^p x+b_j: j\in J\}$.
Recall that $0\in\Ga{{\D'}}$. Then we have $b_j=g_j(0)\in\Ga{{\D'}}$ for all $j\in J$. Since $0<\beta<1/N$, any point in $\Ga{{\D'}}$ has a unique ${\D'}$-code. Let $\{b_{j,n}\}_{n=1}^\f$ be the unique ${\D'}$-code of $b_j$, i.e.,
\begin{equation*}
b_j=\pi(\{b_{j,n}\}_{n=1}^\f)=\sum_{n=1}^\f b_{j,n}\beta^n\quad\textrm{with}~ b_{j,n}\in D'_n~\textrm{ for all }~n\ge 1.
\end{equation*}

 For $1\le \ell\le p$, let $B_\ell:=\{b\in\Z: b+D'_\ell\subseteq D'_{p+\ell}\}$. By Equation (\ref{eq: periodic}) the necessity will then follow if we can show that  $D'_{p+\ell}=B_\ell+D'_\ell$ for $1\le\ell\le p$. Clearly, $0\in B_\ell$ since $D'_\ell\subseteq D'_{p+\ell}$. Directly from the definition of sum of sets, we have the inclusion
\begin{equation*}
 B_\ell +D'_\ell=\bigcup_{b\in B_\ell}(b+D'_\ell)\subseteq D'_{p+\ell}.
\end{equation*}
 On the other hand, let $d\in D'_{p+\ell}$ for some $1\le \ell\le p$. We split the proof of $d\in B_\ell+D'_\ell$ into the following two cases.

Case I. $d\le[(N-1)/{2}]$. Take
\begin{equation*}
x=d\beta^{p+\ell}+\sum_{n=1}^\f\lambda_{p+\ell+n}\beta^{p+\ell+n}\in\Ga{{\D'}},
\end{equation*}
 where $\la_n:=\max\{d: d\in D'_{n}\}$ for $n\ge 1$.
Since $\Ga{{\D'}}=\bigcup_{j\in J}g_j(\Ga{{\D'}})$, there exist $j_x\in J$  and $x'=\sum_{n=1}^\f x_n'\beta^n=\pi(\{x'_n\}_{n=1}^\f)\in\Ga{{\D'}}$ such that $x=g_{j_x}(x')$, i.e.,
\begin{eqnarray}
  d\beta^{p+\ell}&+&\sum_{n=1}^\f\lambda_{p+\ell+n}\beta^{p+\ell+n}=\sum_{n=1}^p b_{j_x,n}\beta^{n}+\sum_{n=1}^{\ell-1}(b_{j_x, p+n}+x'_n)\beta^{p+n}\nonumber\\
  &&\quad+(b_{j_x,p+\ell}+x'_\ell)\beta^{p+\ell}+\sum_{n=1}^\f (b_{j_x,p+\ell+n}+x'_{\ell+n})\beta^{p+\ell+n}.\label{eq: case 1_1}
\end{eqnarray}
Since $0<\beta<1/N$, we have $x<\beta^{p+\ell-1}$, and then we obtain that
$
b_{j_x,n}=0$  for $1\le n<p+\ell
$
 { and} $ x'_n=0$ { for} $1\le n<\ell$.
 Then Equation (\ref{eq: case 1_1}) can be rearranged in the following way:
\begin{equation}\label{eq: case 1_2}
d=(b_{j_x,p+\ell}+x'_\ell)+\sum_{n=1}^\f (b_{j_x, p+\ell+n}+x_{\ell+n}'-\la_{p+\ell+n})\beta^n.
\end{equation}
Since $0<\beta<1/N$ and the digit $b_{j_x,n}$ satisfy $b_{j_x,n}\le \la_n$ for all $n\ge 1$, we have
\begin{eqnarray*}
\Big|\sum_{n=1}^\f (b_{j_x, p+\ell+n}+x_{\ell+n}'-\la_{p+\ell+n})\beta^n\Big|&\le&
\sum_{n=1}^\f\big|x_{\ell+n}'-(\la_{p+\ell+n}-b_{j_x,p+\ell+n})\big|\beta^n\\
&\le&\sum_{n=1}^\f(N-1)\beta^n<1.
\end{eqnarray*}
Then it follows from Equation (\ref{eq: case 1_2}) that
\begin{equation}\label{eq: prop_2}
d= b_{j_x,p+\ell}+x'_{\ell}\,\in\, b_{j_x,p+\ell}+D'_\ell.
\end{equation}

Since $d\le[({N-1})/{2}]$, we have by Equation (\ref{eq: prop_2}) that $b_{j_x, p+\ell}\le[({N-1})/{2}]$. This, together with  $\la_\ell\le N-1$ and $0<\beta\le 1/[{(3N-1)}/{2}]$, yields
$$b_{j_x,p+\ell}+\la_\ell\le[(3N-1)/{2}]-1\le[ 1/\beta]-1,$$
 i.e.,  $b_{j_x,p+\ell}+D'_\ell\subseteq\Om_{[ 1/\beta ]}$. Note that
\begin{equation}\label{eq: prop_sep}
g_{j_x}(D'_\ell\beta^\ell)=\sum_{n\ne p+\ell}b_{j_x,n}\beta^n+(b_{j_x,p+\ell}+D'_\ell)\beta^{p+\ell}\subseteq\Ga{{\D'}}=\sum_{n=1}^\f D'_n\beta^n.
\end{equation}
Since $0<\beta\le 1/[(3N-1)/2]$ and $N\ge 3$, we have $b_{j_x,n}\le N-1<[1/\beta]-1$ for $n\ge 1$. Then, by using Lemma \ref{lem: unique code} in Equation (\ref{eq: prop_sep}), we obtain  $b_{j_x,p+\ell}+D'_\ell\subseteq D'_{p+\ell}$, i.e.,  $b_{j_x,p+\ell}\in B_\ell$. So, by Equation (\ref{eq: prop_2}) we have $d\in B_\ell+D'_\ell$.

Case II. $d> [({N-1})/{2}]$. Take
\begin{equation*}
y=\la_{p+\ell-1}\beta^{p+\ell-1}+d\beta^{p+\ell}+\sum_{n=1}^\f\la_{p+\ell+n}\beta^{p+\ell+n}\in\Ga{{\D'}}.
\end{equation*}
Similarly, since $\Ga{{\D'}}=\bigcup_{j\in J}g_j(\Ga{{\D'}})$, there exist $j_y\in J$ and $y'=\sum_{n=1}^\f y'_n\beta^n=\pi(\{y'_n\}_{n=1}^\f)\in\Ga{{\D'}}$ such that $y= g_{j_y}(y')$, i.e.,
\begin{equation}\label{eq: case 2_1}
\begin{split}
&\la_{p+\ell-1}\beta^{p+\ell-1}+d\beta^{p+\ell}+\sum_{n=1}^\f\la_{p+\ell+n}\beta^{p+\ell+n}\\
=&~\sum_{n=1}^p b_{j_y,n}\beta^n+\sum_{n=1}^{\ell-2}(b_{j_y,p+n}+y'_n)\beta^{p+n}+(b_{j_y, p+\ell-1}+y'_{\ell-1})\beta^{p+\ell-1}\\
&\quad+(b_{j_y,p+\ell}+y'_\ell)\beta^{p+\ell}+\sum_{n=1}^\f(b_{j_y,p+\ell+n}+y_{\ell+n}')\beta^{p+\ell+n}.
\end{split}
\end{equation}
By using $0<\beta<1/N$ we have $y<\beta^{p+\ell-2}$, and then we obtain that
\begin{equation}\label{eq: b_{j_y}}
b_{j_y,n}=0\quad\textrm{for}~1\le n<p+\ell-1
\end{equation} {and} $y'_n=0$ for $1\le n<\ell-1$. Then Equation (\ref{eq: case 2_1}) can be rearranged as
\begin{eqnarray}
  \la_{p+\ell-1}&=&(b_{j_y,p+\ell-1}+y'_{\ell-1})+(b_{j_y,p+\ell}+y'_\ell-d)\beta\nonumber\\
  &&+\sum_{n=1}^\f(b_{j_y,p+\ell+n}+y_{\ell+n}'-\la_{p+\ell+n})\beta^{n+1},
\label{eq: case 2_2}
\end{eqnarray}
where we set $y_0'=0$. Since $0<\beta\le 1/[ (3N-1)/2]$ and $d>[(N-1)/2]$, we have
\begin{equation}\label{eq: case 2_3}
  -([ 1/\beta ]-1)\le b_{j_y,p+\ell}+y_\ell'-d\le [ (3N-1)/2]-1\le[ 1/\beta]-1.
\end{equation}
In a similar way as in Case I, we can show that
\begin{equation}\label{eq: case 2_4}
  \Big|\sum_{n=1}^\f (b_{j_y,p+\ell+n}+y_{\ell+n}'-\la_{p+\ell+n})\beta^{n+1}\Big|<\beta.
\end{equation}
Then, by using Equation (\ref{eq: case 2_3}) and (\ref{eq: case 2_4}) it follows that
\begin{equation*}
 \Big|(b_{j_y,p+\ell}+y'_\ell-d)\beta+\sum_{n=1}^\f(b_{j_y,p+\ell+n}+y_{\ell+n}'-\la_{p+\ell+n})\beta^{n+1}\Big|<1.
\end{equation*}
 This, together with Equation (\ref{eq: case 2_2}), yields that
\begin{equation}\label{eq: case 2_5}
\la_{p+\ell-1}=b_{j_y,p+\ell-1}+y_{\ell-1}'.
\end{equation}
Substituting (\ref{eq: case 2_5}) in Equation (\ref{eq: case 2_2}) we obtain
\begin{equation*}
d=(b_{j_y,p+\ell}+y'_\ell)+\sum_{n=1}^\f(b_{j_y,p+\ell+n}+y_{\ell+n}'-\la_{p+\ell+n})\beta^{n}.
\end{equation*}
This, again by using Equation (\ref{eq: case 2_4}), yields that
\begin{equation}\label{eq: case 2_6}
d=b_{j_y,p+\ell}+y'_\ell\,\in\, b_{j_y,p+\ell}+D'_\ell.
\end{equation}
If $b_{j_y, p+\ell}+\la_\ell<[ 1/\beta]$, then $b_{j_y, p+\ell}+D'_\ell\subseteq\Om_{[ 1/\beta]}$. In a similar way as in Equation (\ref{eq: prop_sep}), we can show that $b_{j_y,p+\ell}+D'_\ell\subseteq D'_{p+\ell}$, i.e., $b_{j_y,p+\ell}\in B_\ell$. Thus, by Equation (\ref{eq: case 2_6}) we have $d\in B_\ell+ D'_\ell$.

We will finish the proof of Case II by showing that $b_{j_y, p+\ell}+\la_\ell\ge [ 1/\beta]$ will lead to a contradiction.  Take $z=y_{\ell-1}'\beta^{\ell-1}+\la_\ell\beta^\ell\in \Ga{{\D'}}$. Then, by Equation (\ref{eq: b_{j_y}}) and (\ref{eq: case 2_5}), we have
\begin{eqnarray*}
g_{j_y}(z)&=&(b_{j_y, p+\ell-1}+y'_{\ell-1})\beta^{p+\ell-1}+(b_{j_y, p+\ell}+\la_{\ell})\beta^{p+\ell}+\sum_{n>p+\ell}b_{j_y,n}\beta^n\\
&=&\la_{p+\ell-1}\beta^{p+\ell-1}+(b_{j_y, p+\ell}+\la_{\ell})\beta^{p+\ell}+\sum_{n> p+\ell}b_{j_y,n}\beta^n.
\end{eqnarray*}
Since $0<\beta<1/N$ and  $b_{j_y, p+\ell}+\la_\ell\ge [ 1/\beta]$, we have
\begin{eqnarray*}
g_{j_y}(z)&>&\la_{p+\ell-1}\beta^{p+\ell-1}+\sum_{n=p+\ell}^\f(N-1)\beta^n\\ g_{j_y}(z)&<&(\la_{p+\ell-1}+1)\beta^{p+\ell-1}+\sum_{n=p+\ell}^\f(N-1)\beta^n.
\end{eqnarray*}
Since $g_{j_y}(z)\in\Ga{\D'}\subseteq\Ga{\Om_N^\f}$, this implies that there exist $c_n\in D_n', n\ge p+\ell$ such that
\begin{equation}\label{eq: gz-1}
g_{j_y}(z)=(\la_{p+\ell-1}+1)\beta^{p+\ell-1}+\sum_{n=p+\ell}^\f c_n\beta^n\in\Ga{\D'}.
\end{equation}
Since by $N\ge 3$ that $0<\beta<1/[(3N-1)/2]\le 1/(N+1)$, we have $\la_{p+\ell-1}+1\le N\le[1/\beta]-1$. Then, by using Lemma \ref{lem: unique code} in Equation (\ref{eq: gz-1}), we obtain $\la_{p+\ell-1}+1\in D'_{p+\ell-1}$, leading to a contradiction with the definition of $\la_{p+\ell-1}$.
  \end{proof}

\section{Intersections of generalized Cantor sets}\label{sec: application}

Let $\Ga{\C}$ and $\Ga{{\D}}$ be two generalized Cantor sets of types $\C=\bigotimes_{n=1}^\f C_n$ and ${\D}=\bigotimes_{n=1}^\f D_n$, respectively. Using (\ref{eq: Gamma_D}), one can easily write the intersection $\Ga{\C}\cap\Ga{{\D}}$ as
\begin{equation*}
\Ga{\C}\cap\Ga{{\D}}=\Big\{\sum_{n=1}^\f d_n\beta^n: d_n\in C_n\cap D_n\Big\}=\pi(\C\cap{\D}),
\end{equation*}
where
$\C\cap{\D}:=\bigotimes_{n=1}^\f (C_n\cap D_n).$
 So $\Ga{\C}\cap\Ga{{\D}}$ is also a generalized Cantor set if $\Ga{\C}\cap\Ga{{\D}}\ne\emptyset$.
 Using Theorem \ref{th: 1}, we have the following characterization
 for the self-similarity of intersections of generalized Cantor sets.

\begin{proposition}\label{th: intersection}
Let $\Ga{\C}$ and $ \Ga{{\D}}$ be two generalized Cantor sets
 of types $\C=\bigotimes_{n=1}^\f C_n$ and ${\D}=\bigotimes_{n=1}^\f D_n$ respectively.
 Let $N=N_{\C\cap\D}\ge 2$ be the span of $\C\cap\D:=\bigotimes_{n=1}^\f (C_n\cap D_n)$.
 Suppose $0<\beta\le 1/[(3N-1)/2]$. Then  $\Ga{\C}\cap\Ga{{\D}}$ is a homogeneously generated self-similar set
   if, and only if, the sequence of sets $\{C_n\cap D_n-\ga_{n}\}_{n=1}^\f$ is {\rm SEP}, where $\ga_{n}=\min\{d: d\in C_n\cap D_n\}$.
\end{proposition}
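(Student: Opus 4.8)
The plan is to identify the intersection with a single generalized Cantor set and then apply Theorem \ref{th: 1} off the shelf. Set $\mathcal{E}:=\C\cap\D=\bigotimes_{n=1}^\f(C_n\cap D_n)$, so that $\mathcal{E}$ is again a type with digit sets $C_n\cap D_n$. The identity $\Ga{\C}\cap\Ga{\D}=\pi(\mathcal{E})=\Ga{\mathcal{E}}$ recorded above exhibits $\Ga{\C}\cap\Ga{\D}$ as the generalized Cantor set of type $\mathcal{E}$; from here the proposition should be a verbatim instance of Theorem \ref{th: 1}.

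What I would check is that every ingredient of Theorem \ref{th: 1} transfers to $\mathcal{E}$. The span of $\mathcal{E}$ is by definition $N=N_{\C\cap\D}\ge 2$; the hypothesis $0<\beta\le 1/[(3N-1)/2]$ is precisely the contraction bound of Theorem \ref{th: 1} written for $\mathcal{E}$; and the quantity $\ga_n=\min\{d:d\in C_n\cap D_n\}$ is exactly the minimal digit of the $n$-th digit set of $\mathcal{E}$. Theorem \ref{th: 1} therefore applies to $\Ga{\mathcal{E}}$ and states that $\Ga{\mathcal{E}}$ is a homogeneously generated self-similar set if, and only if, $\{(C_n\cap D_n)-\ga_n\}_{n=1}^\f$ is {\rm SEP}. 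Since $\Ga{\mathcal{E}}=\Ga{\C}\cap\Ga{\D}$, this is exactly the asserted equivalence, and nothing beyond bookkeeping is needed once the identity is in hand.

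Accordingly, the only point deserving a second look is the identity $\Ga{\C}\cap\Ga{\D}=\pi(\mathcal{E})$ asserted above. The inclusion $\pi(\mathcal{E})\subseteq\Ga{\C}\cap\Ga{\D}$ is immediate, because a code drawn from $C_n\cap D_n$ is simultaneously a $\C$-code and a $\D$-code. The reverse inclusion is the delicate one: a point $x\in\Ga{\C}\cap\Ga{\D}$ a priori comes equipped with a $\C$-code $\{c_n\}$ and a possibly different $\D$-code $\{d_n\}$, and one must produce a single code lying in all the sets $C_n\cap D_n$. The hard part will be reconciling these two codes, and the natural device is uniqueness of codes in the small-$\beta$ regime: embedding both sets into $\Ga{\Om_{[1/\beta]}^\f}$ and invoking the (almost) bijectivity of $\pi$ from Lemma \ref{lem: unique code} forces $c_n=d_n\in C_n\cap D_n$ for all $n$, save for the countably many points possessing a terminating alternative expansion, which must be handled separately. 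Once this reconciliation is secured, the reduction to Theorem \ref{th: 1} completes the proof.
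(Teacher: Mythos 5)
Your proposal follows essentially the same route as the paper: the paper simply records the identity $\Ga{\C}\cap\Ga{\D}=\pi(\C\cap\D)$ (with the remark that the intersection is itself a generalized Cantor set of type $\bigotimes_{n=1}^\f(C_n\cap D_n)$) and then invokes Theorem \ref{th: 1} verbatim, exactly as you do. If anything you are more careful than the paper, which asserts the reverse inclusion $\Ga{\C}\cap\Ga{\D}\subseteq\pi(\C\cap\D)$ without comment, whereas you correctly flag that reconciling a $\C$-code with a $\D$-code requires a uniqueness-of-codes argument in the small-$\beta$ regime.
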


In particular, we consider  the self-similarity of intersections of
a generalized Cantor set $\Ga{\D}$ with its translations, i.e.,
$\Ga{\D}\cap(\Ga{\D}+t)$ for  $t\in\R$. Clearly,
\begin{equation*}
\Ga{\D}\cap(\Ga{\D}+t)\neq\emptyset\quad\textrm{if and only if}\quad t\in\Ga{\D}-\Ga{\D}.
\end{equation*}
Using Equation (\ref{eq: Gamma_D}) the difference set $\Ga{\D}-\Ga{\D}$ can be written as
 \begin{equation*}\label{eq: sec5_2}
 \Ga{\D}-\Ga{\D}=\Big\{\sum_{n=1}^\f t_n\beta^{i } : t_n\in D_n-D_n\Big\}=\pi(\D-\D),
 \end{equation*}
 where
 $
\D-\D:=\bigotimes_{n=1}^\f (D_n-D_n).
$
Then, for $t\in\Ga{\D}-\Ga{\D}$ we can show in a similar way as in \cite{Li_Yao_Zhang_2011} that
\begin{equation*}\label{eq: intersection}
\Ga{\D}\cap(\Ga{\D}+t)=\bigcup_{\{t_n\}_{n=1}^\f}\pi\bigg(\bigotimes_{n=1}^\f\big(D_n\cap(D_n+t_n)\big)\bigg),
\end{equation*}
where the union is taken over all $\D-\D$-codes $\{t_n\}_{n=1}^\f$ of $t$. If $t \in\Ga{\D}-\Ga{\D}$ has a unique $\D-\D$-code $\{t_n\}_{n=1}^\f$, then
\begin{equation*}
\Ga{\D}\cap(\Ga{\D}+t)=\pi\bigg(\bigotimes_{n=1}^\f\big(D_n\cap(D_n+t_n)\big)\bigg),
\end{equation*}
which is  a generalized Cantor  set of type $\bigotimes_{n=1}^\f(D_n\cap(D_n+t_n))$.


By using Theorem \ref{th: 1}, we have the following proposition on
the self-similarity of intersections of a generalized Cantor set
with its translations.
\begin{proposition}\label{th: translations}
 Let $\Ga{\D}$ be a generalized Cantor set of type $\D=\bigotimes_{n=1}^\f D_n$
  with span $N_\D\ge 2$ in {\rm(\ref{eq: Gamma_D})}.
  Suppose $0<\beta\le 1/[(3N_\D-1)/2]$ and $t\in\Ga{\D}-\Ga{\D}$ has a unique $\D-\D$-code $\{t_n\}_{n=1}^\f$.
   Then  $\Ga{\D}\cap(\Ga{{\D}}+t)$ is a homogeneously generated self-similar set
   if, and only if,  the sequence of sets
$\{D_n\cap(D_n+t_n)-\ga_{n}(t)\}_{n=1}^\f$ is {\rm SEP}, where
$\ga_{n}(t)=\min\{d: d\in D_n\cap(D_n+t_n)\}$.
\end{proposition}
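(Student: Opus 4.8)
The plan is to deduce Proposition~\ref{th: translations} from Proposition~\ref{th: intersection} by exhibiting the translate $\Ga{\D}+t$ as a generalized Cantor set of an explicit type. Let $\{t_n\}_{n=1}^\f$ be the unique $\D-\D$-code of $t$, so that $t=\sum_{n=1}^\f t_n\beta^n$ with $t_n\in D_n-D_n$ for all $n$. Put $C_n:=D_n+t_n$ and $\C:=\bigotimes_{n=1}^\f C_n$. Since each $c\in C_n$ is of the form $d+t_n$ with $d\in D_n$ and conversely, one obtains the set identity
\[
\Ga{\D}+t=\Big\{\sum_{n=1}^\f d_n\beta^n+t: d_n\in D_n\Big\}=\Big\{\sum_{n=1}^\f(d_n+t_n)\beta^n: d_n\in D_n\Big\}=\Ga{\C}.
\]
Consequently $\Ga{\D}\cap(\Ga{\D}+t)=\Ga{\C}\cap\Ga{\D}$, and for every $n$ we have the digit identity $C_n\cap D_n=(D_n+t_n)\cap D_n=D_n\cap(D_n+t_n)$; in particular $\min\{d:d\in C_n\cap D_n\}=\ga_n(t)$.

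Next I would check that the pair $\C,\D$ meets the hypotheses of Proposition~\ref{th: intersection}. Let $N':=N_{\C\cap\D}$ be the span of $\C\cap\D=\bigotimes_{n=1}^\f(C_n\cap D_n)$. Because $C_n\cap D_n\subseteq D_n$, we have $\max_{d,d'\in C_n\cap D_n}(1+d-d')\le\max_{d,d'\in D_n}(1+d-d')$ for each $n$, and taking the supremum over $n$ gives $N'\le N_\D$. Since the map $N\mapsto 1/[(3N-1)/2]$ is nonincreasing in $N$, the standing assumption $0<\beta\le 1/[(3N_\D-1)/2]$ implies $0<\beta\le 1/[(3N'-1)/2]$, which is precisely the contraction bound required by Proposition~\ref{th: intersection}. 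This transfer of the span bound from $N_\D$ to $N'$ is the only genuinely substantive step; everything else is bookkeeping atop the intersection result.

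Finally, assuming $N'\ge 2$, Proposition~\ref{th: intersection} applied to $\C$ and $\D$ states that $\Ga{\C}\cap\Ga{\D}$ is a homogeneously generated self-similar set if and only if $\{C_n\cap D_n-\ga_n\}_{n=1}^\f$ is {\rm SEP}, where $\ga_n=\min\{d:d\in C_n\cap D_n\}$. Feeding in the identities from the first paragraph, namely $C_n\cap D_n=D_n\cap(D_n+t_n)$ and $\ga_n=\ga_n(t)$, turns this into exactly the asserted equivalence for $\Ga{\D}\cap(\Ga{\D}+t)$. The one situation falling outside the scope of Proposition~\ref{th: intersection} is the degenerate case $N'=1$: there every $C_n\cap D_n$ is a singleton, so $\Ga{\D}\cap(\Ga{\D}+t)$ reduces to a single point, which is trivially a homogeneously generated self-similar set, while $\{D_n\cap(D_n+t_n)-\ga_n(t)\}_{n=1}^\f$ is the constant sequence with every term equal to $\{0\}$ and hence trivially {\rm SEP}; thus the equivalence holds vacuously, and the proof is complete.
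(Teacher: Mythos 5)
Your proposal is correct and follows essentially the same route as the paper: the paper identifies $\Ga{\D}\cap(\Ga{\D}+t)$ with the generalized Cantor set $\pi\big(\bigotimes_{n=1}^\f(D_n\cap(D_n+t_n))\big)$ using the unique $\D-\D$-code and then invokes Theorem \ref{th: 1}, while you reach the same set by realizing $\Ga{\D}+t$ as $\Ga{\C}$ with $C_n=D_n+t_n$ and passing through Proposition \ref{th: intersection}, which is just Theorem \ref{th: 1} in disguise. Your explicit verification that the span of $\bigotimes_n(D_n\cap(D_n+t_n))$ is at most $N_\D$ (so the bound on $\beta$ transfers) and your treatment of the degenerate span-one case are details the paper leaves implicit, and they are handled correctly.
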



\bigskip

\begin{center}
{\bf Acknowledgement}
\end{center}

\medskip

%

The author would like to thank Wenxia Li for  his  critical reading of the previous versions of the paper. His comments and suggestions have greatly improved the paper. The author would also  thank  Michel Dekking for many fruitful discussions. The author is partially supported  by the National Natural Science Foundation of China 10971069 and Shanghai Education Committee Project 11ZZ41.

\bibliographystyle{plain}
\bibliography{Self-similar-structure}
\end{document}